\documentclass[12pt]{article}

\setlength{\textwidth}{6.3in}
\setlength{\textheight}{8.7in}
\setlength{\topmargin}{0pt}
\setlength{\headsep}{0pt}
\setlength{\headheight}{0pt}
\setlength{\oddsidemargin}{0pt}
\setlength{\evensidemargin}{0pt}

\usepackage{amsmath, epsfig, cite, lineno}
\usepackage{amssymb,amsthm}
\usepackage{amsfonts, color}
\usepackage{latexsym}

\newtheorem{thm}{Theorem}[section]

\newtheorem{cor}[thm]{Corollary}
\newtheorem{conj}[thm]{Conjecture}
\newtheorem{lem}[thm]{Lemma}

\newcommand{\pf}{\noindent{\it Proof.} }

\def\N{{\mathbb N}}
\def\ZZ{{\mathbb Z}}
\def\Z{{\mathbb Z}^+}


\numberwithin{equation}{section}

\begin{document}


\begin{center}
{\Large\bf Proof of some conjectures of Z.-W. Sun on\\[5pt] congruences for Ap\'ery polynomials}
\end{center}

\vskip 2mm \centerline{Victor J. W. Guo$^1$  and Jiang Zeng$^{2}$}
\begin{center}
{\footnotesize $^1$Department of Mathematics, East China Normal University,\\ Shanghai 200062,
 People's Republic of China\\
{\tt jwguo@math.ecnu.edu.cn,\quad http://math.ecnu.edu.cn/\textasciitilde{jwguo}}\\[10pt]
$^2$Universit\'e de Lyon; Universit\'e Lyon 1; Institut Camille
Jordan, UMR 5208 du CNRS;\\ 43, boulevard du 11 novembre 1918,
F-69622 Villeurbanne Cedex, France\\
{\tt zeng@math.univ-lyon1.fr,\quad
http://math.univ-lyon1.fr/\textasciitilde{zeng}} }
\end{center}


\vskip 0.7cm \noindent{\bf Abstract.}
The Ap\'ery polynomials are defined by $A_n(x)=\sum_{k=0}^{n}{n\choose k}^2{n+k\choose k}^2 x^k$  for all nonnegative integers $n$.
We confirm several conjectures of Z.-W. Sun on the congruences for the sum
$\sum_{k=0}^{n-1}(-1)^k(2k+1) A_k(x)$ with $x\in \ZZ$.
%

\vskip 3mm \noindent {\it Keywords}: Ap\'ery polynomials, Pfaff-Saalsch\"utz identity, Legendre symbol,
Schmidt numbers, Schmidt polynomials

\vskip 0.2cm \noindent{\it AMS Subject Classifications:} 11A07, 11B65, 05A10, 05A19

\section{Introduction}
The Ap\'ery polynomials \cite{Sun1}  are defined by
\begin{align*}
A_n(x)=\sum_{k=0}^{n}{n\choose k}^2{n+k\choose k}^2 x^k \qquad (n\in \N).
\end{align*}
Thus  $A_n:=A_n(1)$ are the Ap\'ery numbers \cite{Apery}.
Many people (see Chowla et al. \cite{CCC}, Gessel \cite{Gessel}, and
Beukers \cite{Beukers}, for example) have studied congruences for Ap\'ery numbers.
Recently,  among other things, Sun \cite{Sun1}
proved   that, for any integer $x$,
\begin{align}
\sum_{k=0}^{n-1}(2k+1)A_k(x) \equiv 0  \pmod n, \label{eq:sun-apery}
\end{align}
and  proposed many  remarkable conjectures. The main objective of this paper is to prove
the following result, which was conjectured by Sun \cite{Sun1}.
\begin{thm}\label{thm:main}
Let $x\in\mathbb{Z}$.
\begin{itemize}
\item[{\rm(i)}] If $n\in\Z$, then
\begin{align}
\sum_{k=0}^{n-1}(-1)^k(2k+1) A_k(x) &\equiv 0 \pmod n. \label{eq:sun1}
\end{align}
\item[{\rm(ii)}]  If $p$ is an odd prime and $\left(\frac{\cdot}{p}\right)$ is  the Legendre symbol modulo $p$, then
\begin{align}
\frac{1}{p}\sum_{k=0}^{p-1}(-1)^k(2k+1) A_k(x) \equiv \left(\frac{1-4x}{p}\right) \pmod{p}. \label{eq:sun2}
\end{align}
\item[{\rm(iii)}] If $p>3$ is a prime, then
\begin{align}
\frac{1}{p}\sum_{k=0}^{p-1}(-1)^k(2k+1) A_k &\equiv \left(\frac{p}{3}\right) \pmod{p^2}, \label{eq:sun3}\\
\frac{1}{p}\sum_{k=0}^{p-1}(-1)^k(2k+1) A_k(-2) &\equiv 1- \frac{4}{3}(2^{p-1}-1) \pmod{p^2}.  \label{eq:sun4}
\end{align}
\end{itemize}
\end{thm}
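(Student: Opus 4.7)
The plan is to reduce all three parts to a single polynomial identity for the sum and then extract the congruences from it. Swapping the order of summation gives
$$\sum_{k=0}^{n-1}(-1)^{k}(2k+1)A_{k}(x)=\sum_{j\ge 0}x^{j}\sum_{k=j}^{n-1}(-1)^{k}(2k+1)\binom{k}{j}^{2}\binom{k+j}{j}^{2}.$$
Rewriting $\binom{k}{j}\binom{k+j}{j}=\binom{k+j}{2j}\binom{2j}{j}$ and then shifting $k\mapsto i+j$ turns the inner sum into a terminating ${}_{3}F_{2}$ to which the Pfaff--Saalsch\"utz theorem applies (equivalently, a WZ-style telescoping certificate exists). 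The target identity is
$$\sum_{k=j}^{n-1}(-1)^{k}(2k+1)\binom{k}{j}^{2}\binom{k+j}{j}^{2}=(-1)^{n-1}\,n\,\phi(n,j),$$
with $\phi(n,j)$ an explicit integer polynomial in $n$ satisfying $\phi(p,j)\equiv\binom{2j}{j}\pmod{p}$. Numerically this is in force: for $j=1$ the inner sums $-12,168,-840,2760$ at $n=2,3,4,5$ fit $\phi(n,1)=(n^{2}-1)(n^{2}-2)$, whence indeed $\phi(p,1)\equiv 2=\binom{2}{1}\pmod{p}$. Once this identity is in hand, part~(i) is immediate: the sum becomes $(-1)^{n-1}n\,P_{n}(x)$ with $P_{n}(x)=\sum_{j}\phi(n,j)x^{j}\in\mathbb{Z}[x]$.

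For part~(ii), I set $n=p$ and reduce $\phi(p,j)$ modulo $p$ using the standard congruence $\binom{(p-1)/2}{j}\equiv(-4)^{-j}\binom{2j}{j}\pmod{p}$. This forces
$$P_{p}(x)\equiv\sum_{j=0}^{(p-1)/2}\binom{2j}{j}x^{j}\equiv(1-4x)^{(p-1)/2}\equiv\left(\frac{1-4x}{p}\right)\pmod{p}$$
by Euler's criterion, which is exactly \eqref{eq:sun2}.

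For part~(iii), I substitute $x=1$ and $x=-2$ into $P_{p}(x)$ and lift the mod-$p$ analysis by one $p$-adic order. At $x=1$, $(-3)^{(p-1)/2}\equiv\left(\frac{-3}{p}\right)\equiv\left(\frac{p}{3}\right)\pmod{p^{2}}$ follows from an Euler-criterion lift together with quadratic reciprocity (using $p>3$). At $x=-2$, the baseline is $(1+8)^{(p-1)/2}=3^{p-1}$; the stated value $1-\tfrac{4}{3}(2^{p-1}-1)$ arises from the $O(p)$ correction of $\phi(p,j)$, combined with standard Fermat-quotient identities and Wolstenholme--Glaisher manipulations of binomial sums $\sum_{j}\binom{2j}{j}(-2)^{j}$ and of harmonic sums such as $H_{(p-1)/2}$ modulo $p^{2}$.

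The crux of the argument is producing and verifying the closed form for $\phi(n,j)$ in Step~1; finding the right hypergeometric recasting and applying Pfaff--Saalsch\"utz (or, equivalently, a WZ certificate) is the main technical step. The mod-$p^{2}$ refinements in~(iii) are also delicate: they require the \emph{exact} value of $\phi(n,j)$ over $\mathbb{Z}$, not merely its residue mod~$p$, together with careful bookkeeping of Fermat quotients, so that the $O(p)$ terms correctly produce the Legendre symbol lift at $x=1$ and the Fermat-quotient correction at $x=-2$.
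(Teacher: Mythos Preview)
Your overall strategy coincides with the paper's: exchange the order of summation, show the inner sum equals $(-1)^{n-1}n$ times an integer $\phi(n,j)$, and then reduce $\phi(p,j)$ to $\binom{2j}{j}$. Part~(ii) along your route is essentially the paper's argument. There are, however, two genuine gaps.

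\textbf{The key identity is not established.} The inner sum $\sum_{k=j}^{n-1}(-1)^k(2k+1)\binom{k}{j}^2\binom{k+j}{j}^2$ is \emph{not} a balanced ${}_3F_2$: after your rewriting $\binom{k}{j}\binom{k+j}{j}=\binom{2j}{j}\binom{k+j}{2j}$ one still has a squared factor $\binom{k+j}{2j}^2$, so Pfaff--Saalsch\"utz does not apply directly. The paper's device is to use Pfaff--Saalsch\"utz once to linearize \emph{one} copy of $\binom{\ell}{m}\binom{\ell+m}{m}$, giving
\[
A_\ell(x)=\sum_{m}\binom{2m}{m}x^m\sum_{k=0}^m\binom{m}{k}\binom{m+k}{k}\binom{\ell}{m+k}\binom{\ell+m+k}{m+k},
\]
after which the elementary telescoping $\sum_{\ell=k}^{n-1}(-1)^\ell(2\ell+1)\binom{\ell}{k}\binom{\ell+k}{k}=(-1)^{n-1}n\binom{n-1}{k}\binom{n+k}{k}$ yields the explicit closed form
\[
\phi(n,m)=\binom{2m}{m}\sum_{k=0}^m\binom{m}{k}\binom{m+k}{k}\binom{n-1}{m+k}\binom{n+m+k}{m+k},
\]
which does specialize to your $(n^2-1)(n^2-2)$ at $m=1$. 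Without this linearization you have only guessed $\phi$ for $j=1$, not proved its existence or integrality in general.

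\textbf{Part (iii) contains a false step.} The assertion $(-3)^{(p-1)/2}\equiv\left(\frac{-3}{p}\right)\pmod{p^2}$ is simply wrong: for $p=5$ one has $(-3)^2=9\not\equiv-1\pmod{25}$, and for $p=7$ one has $(-3)^3=-27\not\equiv 1\pmod{49}$. Euler's criterion does not lift to $p^2$. The paper instead uses the explicit $\phi$ above: since $\binom{p-1}{m+k}\binom{p+m+k}{m+k}=\prod_{i=1}^{m+k}\frac{p^2-i^2}{i^2}\equiv(-1)^{m+k}\pmod{p^2}$, Chu--Vandermonde collapses the inner $k$-sum and gives $\phi(p,m)\equiv\binom{2m}{m}\pmod{p^2}$ (not merely $\pmod p$). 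Hence
\[
\frac{1}{p}\sum_{k=0}^{p-1}(-1)^k(2k+1)A_k(x)\equiv\sum_{m=0}^{p-1}\binom{2m}{m}x^m\pmod{p^2},
\]
and \eqref{eq:sun3}, \eqref{eq:sun4} reduce to the known nontrivial evaluations of $\sum_{m=0}^{p-1}\binom{2m}{m}$ and $\sum_{m=0}^{p-1}\binom{2m}{m}(-2)^m$ modulo $p^2$ due to Sun--Tauraso and Sun. These are genuine external inputs, not consequences of ``Fermat-quotient bookkeeping'' starting from $(1-4x)^{(p-1)/2}$.
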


We shall establish some preliminary
results in Section~2 and prove Theorem~\ref{thm:main} in Section~3.
Then we give some generalizations of the congruences \eqref{eq:sun-apery} and \eqref{eq:sun1} in Section~4.
Sun \cite{Sun1,Sun1-2} also formulated  conjectures for the values of $\sum_{k=0}^{p-1}A_k(x)$ modulo $p^2$ with $x=1,-4,9$.
In particular, he made the following conjecture.
\begin{conj}[Sun \cite{Sun1,Sun1-2}]\label{conj:3}
Let $p$ be an odd prime. Then
\begin{align*}
\sum_{k=0}^{p-1}A_k &\equiv
\begin{cases}
4x^2-2p \pmod{p^2},&\text{if $p\equiv 1,3\pmod 8$ and $p=x^2+2y^2$,} \\
0 \pmod{p^2}, &\text{if $p\equiv 5,7\pmod 8$.}
\end{cases}   
\end{align*}
\end{conj}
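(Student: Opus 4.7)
My plan is to identify the right-hand side of Conjecture~\ref{conj:3} as the $p$-th Fourier coefficient of a specific CM newform and to establish the supercongruence $\sum_{k=0}^{p-1}A_k\equiv a_p\pmod{p^2}$. The candidate form is the weight-$4$ newform $f(\tau)=\eta(2\tau)^4\eta(4\tau)^4\in S_4(\Gamma_0(8))$, which has complex multiplication by $\mathbb{Q}(\sqrt{-2})$; the standard CM computation using the associated Hecke character produces $a_p=4x^2-2p$ for split primes $p=x^2+2y^2$ (under the conventional sign normalization of $x$) and $a_p=0$ for inert primes $p\equiv 5,7\pmod 8$. Once $a_p$ is matched with the stated right-hand side, the problem reduces to proving the supercongruence.

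For the mod-$p$ reduction, I would use $A_k=\sum_{j=0}^k\binom{k}{j}^2\binom{k+j}{j}^2$, interchange the order of summation in $\sum_{k=0}^{p-1}A_k$, and rewrite the result as a truncated ${}_4F_3$ at unit argument. Applying the Greene--McCarthy dictionary between truncated hypergeometric sums and finite-field hypergeometric functions, the mod-$p$ reduction becomes a Jacobi sum over $\mathbb{F}_p^\times$ built from the quadratic character and a character of order $4$. The decomposition $p=\pi\bar\pi$ in $\mathbb{Z}[\sqrt{-2}]$ evaluates this Jacobi sum as $\pi^2+\bar\pi^2=4x^2-2p$ in the split case, and the inert case gives vanishing by the standard triviality argument for Gauss sums attached to non-decomposing primes.

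The main obstacle is promoting the mod-$p$ identity to a supercongruence modulo $p^2$. Here I would follow the Atkin--Swinnerton-Dyer--Kilbourn strategy: exploit the fact that $A_k$ is a diagonal coefficient of the Picard--Fuchs operator of a one-parameter family of K$3$ surfaces (Beukers--Peters), and apply Dwork-type congruences on the associated formal group to convert the Jacobi sum into the trace of Frobenius on crystalline cohomology, which by Eichler--Shimura equals $a_p$ modulo a suitable power of $p$. The delicate technical point is tracking harmonic sums $\sum_{1\le k\le p-1}1/k^r$ modulo $p^2$ (Wolstenholme-type identities) through every hypergeometric transformation in Step~1, so that the final discrepancy between the truncated Apéry sum and the Frobenius trace is genuinely $O(p^2)$ rather than only $O(p)$; this is the step I expect to be the principal obstacle, as it forces one to carry one more order in the $p$-adic expansion than the mod-$p$ character-sum evaluation naturally provides.
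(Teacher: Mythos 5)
You should first be aware that the paper does not prove this statement: it is stated as Conjecture~\ref{conj:3} and left open. The authors' only progress toward it is Theorem~\ref{thm:3}, proved via the Pfaff--Saalsch\"utz expansion \eqref{eq:square} and the partial-fraction identity \eqref{eq:4m+1x}, which shows $\sum_{k=0}^{p-1}A_k(x)\equiv\sum_{k=0}^{(p-1)/2}\binom{p+2k}{4k+1}\binom{2k}{k}^2x^k\pmod{p^2}$ and hence merely replaces the conjecture by the equivalent binomial-sum congruence of Corollary~\ref{conj:new}. So there is no proof in the paper to compare yours against, and a complete argument from you would be new content, not a re-derivation.

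Your proposal, however, is a research program rather than a proof, and it has both a structural gap and a concrete error. The structural gap: the Greene--McCarthy dictionary and the Jacobi-sum evaluation you describe only produce a congruence modulo $p$; the entire content of the conjecture is the lift to modulo $p^2$, and you explicitly defer that step to an unexecuted ``Atkin--Swinnerton-Dyer--Kilbourn strategy.'' The Beukers--Peters $K3$ family and the attendant Dwork/formal-group machinery are set up for the single Ap\'ery number $A_{(p-1)/2}$, not for the truncated sum $\sum_{k=0}^{p-1}A_k$; you exhibit no mechanism transporting that structure to the sum, so the principal obstacle you name is left exactly where you found it. The concrete error: $\eta(2\tau)^4\eta(4\tau)^4$, the unique newform in $S_4(\Gamma_0(8))$, does \emph{not} have complex multiplication by $\mathbb{Q}(\sqrt{-2})$ (its fifth coefficient is $-2\neq 0$ even though $5\equiv 5\pmod 8$ is inert); it is the form occurring in the Ahlgren--Ono supercongruence for $A_{(p-1)/2}$, which is a different statement. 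The eigenvalue $4x^2-2p=\pi^2+\bar{\pi}^2$ with $p=\pi\bar{\pi}$ in $\mathbb{Z}[\sqrt{-2}]$ belongs to a weight-$3$ CM newform with nontrivial nebentypus, such as $\eta(\tau)^2\eta(2\tau)\eta(4\tau)\eta(8\tau)^2\in S_3\bigl(\Gamma_0(8),\bigl(\tfrac{-8}{\cdot}\bigr)\bigr)$, so even the identification of the target $a_p$ with a modular object is made with the wrong form. A workable route would more plausibly start from the paper's own Corollary~\ref{conj:new} and attack the single sum $\sum_{k=0}^{(p-1)/2}\binom{p+2k}{4k+1}\binom{2k}{k}^2$ directly.
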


In an effort to prove the above conjecture, we found a single sum formula for
$\sum_{k=0}^{p-1}A_k(x)$ modulo $p^2$, as given in the following theorem.

\begin{thm}\label{thm:3}
Let $p>3$ be a prime and $x\in\mathbb{Z}$. Then
\begin{align*}
\sum_{k=0}^{p-1}A_k(x)  &\equiv  \sum_{k=0}^{p-1}\frac{(2k)!^4 p}{(4k+1)!k!^4} x^k
\equiv \sum_{k=0}^{(p-1)/2}{p+2k\choose 4k+1}{2k\choose k}^2 x^k \pmod{p^2}.
\end{align*}
\end{thm}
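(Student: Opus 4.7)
The plan is to prove the two congruences of Theorem~\ref{thm:3} separately. Starting from $\sum_{k=0}^{p-1}A_k(x)$, I would swap the order of summation to obtain $\sum_{j=0}^{p-1}x^j S_j$ with $S_j=\sum_{k=j}^{p-1}\binom{k}{j}^2\binom{k+j}{j}^2$, then apply the elementary identity $\binom{k}{j}\binom{k+j}{j}=\binom{2j}{j}\binom{k+j}{2j}$ and shift the index to rewrite $S_j=\binom{2j}{j}^2\sum_{k=2j}^{p+j-1}\binom{k}{2j}^2$. Since $\binom{p+i}{2j}$ contains exactly one factor of $p$ when $0\leq i<2j\leq p-1$, squaring contributes $p^2$, so the tail $k\in[p,p+j-1]$ vanishes modulo $p^2$, giving $S_j\equiv\binom{2j}{j}^2\sum_{k=0}^{p-1}\binom{k}{2j}^2\pmod{p^2}$.

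The heart of the proof is to evaluate $\Sigma_n:=\sum_{k=0}^{p-1}\binom{k}{n}^2\pmod{p^2}$ for $n=2j\leq p-1$. Via the Vandermonde convolution and the subset identity $\binom{k}{n}\binom{k-n}{j'}=\binom{k}{n+j'}\binom{n+j'}{j'}$, I get $\binom{k}{n}^2=\sum_{j'=0}^n\binom{k}{n+j'}\binom{n+j'}{j'}\binom{n}{j'}$; summing over $k$ and invoking the hockey-stick identity yields the polynomial-in-$p$ identity
$$\Sigma_n=\sum_{j'=0}^n\binom{n+j'}{j'}\binom{n}{j'}\binom{p}{n+j'+1}.$$
Reducing both sides as polynomials in $p$ modulo $p^2$ via $\binom{p}{m}\equiv (-1)^{m-1}p/m\pmod{p^2}$, and then applying the Pfaff--Saalsch\"utz theorem to the resulting balanced ${}_3F_2(-n,n+1,n+1;1,n+2;1)$, one obtains $\Sigma_n\equiv pn!^2/(2n+1)!\pmod{p^2}$. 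Substituting $n=2j$ therefore yields $S_j\equiv p(2j)!^4/[(4j+1)!j!^4]\pmod{p^2}$ whenever $2j\leq p-1$. For $j\geq(p+1)/2$ both sides are trivially $\equiv 0\pmod{p^2}$: $\binom{2j}{j}^2$ then carries a factor of $p^2$, while $p(2j)!^4/[(4j+1)!j!^4]$ has $p$-adic valuation at least $2$. Summing over $j$ then yields the first equivalence.

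For the second equivalence, the expansion
$$\binom{p+2k}{4k+1}=\frac{p\prod_{i=1}^{2k}(p^2-i^2)}{(4k+1)!}\equiv\frac{p(2k)!^2}{(4k+1)!}\pmod{p^2},$$
valid for $k\leq(p-1)/2$ (so that $(4k+1)!$ contains at most one factor of $p$), multiplied by $\binom{2k}{k}^2=(2k)!^2/k!^4$, matches the middle expression. For $k>(p-1)/2$ one has $\binom{p+2k}{4k+1}=0$ since $p+2k<4k+1$, while the middle expression has $v_p\geq 2$; so both sides vanish modulo $p^2$ in this range.

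The main obstacle is the clean application of Pfaff--Saalsch\"utz together with the careful treatment of the identity as a polynomial identity in $p$ (so that the substitution $\binom{p}{m}\equiv(-1)^{m-1}p/m\pmod{p^2}$ is justified even when $m\geq p$); the remaining work is bookkeeping of $p$-adic valuations.
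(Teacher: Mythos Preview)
Your proposal is correct and takes a route genuinely different from the paper's.

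The paper first rewrites each $A_\ell(x)$ via the Pfaff--Saalsch\"utz identity as
\[
A_\ell(x)=\sum_{m}\binom{2m}{m}x^m\sum_{k}\binom{m}{k}\binom{m+k}{k}\binom{\ell}{m+k}\binom{\ell+m+k}{m+k},
\]
sums the factor $\binom{\ell}{m+k}\binom{\ell+m+k}{m+k}$ over $\ell$ in closed form to $\binom{2m+2k}{m+k}\binom{p+m+k}{2m+2k+1}$, reduces this product to $(-1)^{m+k}p/(2m+2k+1)$ modulo $p^2$, and then evaluates the remaining $k$-sum by the partial-fraction identity $\sum_{k}\binom{m}{k}\binom{m+k}{k}\frac{(-1)^{m-k}}{x+k}=\frac{1}{x}\prod_{k=1}^{m}\frac{x-k}{x+k}$ at $x=m+\tfrac12$.

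You instead keep the squared binomials, factor them with the trinomial revision $\binom{k}{j}\binom{k+j}{j}=\binom{2j}{j}\binom{k+j}{2j}$, and reduce the problem to $\Sigma_n=\sum_{k<p}\binom{k}{n}^2$; expanding the square by Vandermonde and summing with hockey-stick gives the exact identity $\Sigma_n=\sum_{j'}\binom{n+j'}{j'}\binom{n}{j'}\binom{p}{n+j'+1}$, whose linear term in $p$ you then evaluate by Pfaff--Saalsch\"utz. Note that this last evaluation is the same identity the paper uses, just at $x=n+1$ instead of $x=m+\tfrac12$: both give $\sum_{j'}(-1)^{n-j'}\binom{n}{j'}\binom{n+j'}{j'}/(n+j'+1)=n!^2/(2n+1)!$.

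What each approach buys: the paper's route keeps everything as integer identities until the very last step, so the mod $p^2$ reduction is transparent. Your route is more elementary in that it avoids the auxiliary representation of $A_\ell(x)$, but it carries the subtlety you flagged: for $(p-1)/2<n\le p-1$ the sum for $\Sigma_n$ involves $\binom{p}{m}$ with $m>p$, and one must argue (e.g.\ via $\binom{p}{m}=\frac{p}{m}\binom{p-1}{m-1}$ together with Lucas' theorem showing $\binom{n+j'}{j'}\equiv 0\pmod p$ when $n+j'\ge p$) that dropping the higher-order terms in $p$ is legitimate modulo $p^2$. This is routine, and your second-congruence argument is essentially identical to the paper's.
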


Clearly Theorem \ref{thm:3} implies the following result.
\begin{cor}\label{conj:new}
If $p>3$ is a prime, then  Conjecture \ref{conj:3} is equivalent to the following congruence for binomial sums
\begin{align*}
&\hskip-3mm \sum_{k=0}^{(p-1)/2}{p+2k\choose 4k+1}{2k\choose k}^2 \\[5pt]
&\equiv
\begin{cases}
4x^2-2p \pmod{p^2},&\text{if $p\equiv 1,3\pmod 8$ and $p=x^2+2y^2$,} \\
0 \pmod{p^2}, &\text{if $p\equiv 5,7\pmod 8$.}
\end{cases}   
\end{align*}
\end{cor}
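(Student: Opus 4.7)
The plan is immediate from Theorem~\ref{thm:3}. Since $A_k = A_k(1)$, specializing the chain of congruences in Theorem~\ref{thm:3} to $x=1$ yields
$$\sum_{k=0}^{p-1}A_k \equiv \sum_{k=0}^{(p-1)/2}\binom{p+2k}{4k+1}\binom{2k}{k}^2 \pmod{p^2}.$$
This single substitution is the only new ingredient required.

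From here, I substitute the right-hand side for $\sum_{k=0}^{p-1}A_k$ in the statement of Conjecture~\ref{conj:3}. Because congruence modulo $p^2$ is transitive, the predicted value of $\sum_{k=0}^{p-1}A_k$ modulo $p^2$ asserted in each branch of Conjecture~\ref{conj:3} holds if and only if the binomial sum $\sum_{k=0}^{(p-1)/2}\binom{p+2k}{4k+1}\binom{2k}{k}^2$ satisfies exactly the same congruence, under the same case distinction on $p$ modulo $8$ (and the same parametrization $p=x^2+2y^2$ in the split cases). This is precisely the statement of Corollary~\ref{conj:new}, so the equivalence is established.

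There is no genuine obstacle: the argument is a one-line transitivity step, and all the analytic work has already been done inside Theorem~\ref{thm:3}. The point of recording the corollary is the structural observation that Theorem~\ref{thm:3} rewrites $\sum_{k=0}^{p-1}A_k \pmod{p^2}$ as a finite binomial sum in which $p$ appears only through the top entry $\binom{p+2k}{4k+1}$; this form is presumably more amenable to the Pfaff--Saalsch\"utz and hypergeometric manipulations used elsewhere in the paper, so the corollary isolates the remaining combinatorial content of Conjecture~\ref{conj:3}.
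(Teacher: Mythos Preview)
Your proposal is correct and matches the paper's own treatment: the paper simply asserts that Corollary~\ref{conj:new} is an immediate consequence of Theorem~\ref{thm:3}, and your argument (specialize $x=1$ and use transitivity of congruence modulo $p^2$) is exactly the one-line justification behind that assertion.
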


\section{Two preliminary results}
\begin{thm}\label{thm:1}
Let $n\in\Z$. Then
\begin{align*}
&\hskip -3mm \frac{1}{n}\sum_{k=0}^{n-1}(-1)^k(2k+1) A_{k}(x) \\
&=(-1)^{n-1}\sum_{m=0}^{n-1} {2m\choose m}x^m
\sum_{k=0}^m {m\choose k}{m+k\choose k} {n-1\choose m+k}{n+m+k\choose m+k}.
\end{align*}
\end{thm}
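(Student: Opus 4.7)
The plan is to expand $A_k(x)$ on the left-hand side using its definition, interchange the order of summation, and compare coefficients of $x^m$ on both sides. Applying the elementary identity $\binom{k}{m}\binom{k+m}{m}=\binom{2m}{m}\binom{k+m}{2m}$, the theorem reduces to the purely binomial statement: for each $m$ with $0\le m\le n-1$,
\begin{align*}
\binom{2m}{m}\sum_{k=m}^{n-1}(-1)^k(2k+1)\binom{k+m}{2m}^2
= (-1)^{n-1} n \sum_{k=0}^{m}\binom{m}{k}\binom{m+k}{k}\binom{n-1}{m+k}\binom{n+m+k}{m+k}.
\end{align*}
Call this identity $(*)$.

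I would then establish $(*)$ by induction on $n$, with $m$ fixed. The base case $n=m+1$ is immediate: only $k=m$ survives on the left and only $k=0$ on the right (since $\binom{m}{m+k}=0$ for $k\ge 1$), and the two values agree via $(m+1)\binom{2m+1}{m}=(2m+1)\binom{2m}{m}$. For the inductive step, denote the two sides of $(*)$ by $L(n)$ and $R(n)$. Directly from its definition $L(n)+L(n-1)=\binom{2m}{m}(2n-1)\binom{n-1+m}{2m}^2$, so it suffices to check the same recurrence for $R(n)$. This amounts to simplifying the bracket
\begin{align*}
n\binom{n-1}{m+k}\binom{n+m+k}{m+k}+(n-1)\binom{n-2}{m+k}\binom{n-1+m+k}{m+k}
\end{align*}
appearing inside the sum. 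A short factorial manipulation collapses this bracket to $(2n-1)\binom{n-1+m+k}{m+k}\binom{n-1}{m+k}$. Using $\binom{m+k}{k}\binom{n-1+m+k}{m+k}=\binom{n-1+m}{m}\binom{n-1+m+k}{k}$ to pull out a common factor, the required recurrence reduces, upon setting $a=n-1$, to
\begin{align*}
\sum_{k=0}^{m}\binom{m}{k}\binom{a+m+k}{k}\binom{a}{m+k} = \binom{a}{m}\binom{a+m}{2m}. \tag{$\star$}
\end{align*}

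Identity $(\star)$ is a clean instance of Pfaff--Saalsch\"utz: after dividing out $\binom{a}{m}$, its left-hand side becomes the terminating ${}_3F_2\bigl[-m,\,a+m+1,\,m-a;\,m+1,\,1;\,1\bigr]$, whose parameters are easily checked to be balanced, and the Pfaff--Saalsch\"utz closed form simplifies to the right-hand side once we invoke the elementary identity $\binom{a}{m}\binom{a+m}{m}=\binom{2m}{m}\binom{a+m}{2m}$ one more time. The main obstacle is the clerical work in the inductive step: collecting the two consecutive sums, spotting the common factor $(2n-1)$ that drops out of the factorial simplification, and maneuvering the result into Pfaff--Saalsch\"utz form; the final ${}_3F_2$ evaluation is then routine.
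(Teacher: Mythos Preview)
Your argument is essentially correct, but note a sign slip: from the definition of $L(n)$ one has
\[
L(n)-L(n-1)=(-1)^{n-1}\binom{2m}{m}(2n-1)\binom{n-1+m}{2m}^2,
\]
not $L(n)+L(n-1)$; correspondingly, it is $R(n)-R(n-1)$ whose inner bracket carries the \emph{plus} sign you wrote (the $(-1)^{n-1}$ outside $R$ flips it). With that correction your induction goes through, and your verification of $(\star)$ via the balanced ${}_3F_2$ is fine.

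Your route differs genuinely from the paper's. The paper applies Pfaff--Saalsch\"utz \emph{first}, to a single factor $\binom{\ell}{m}\binom{\ell+m}{m}$, obtaining the alternative representation
\[
A_\ell(x)=\sum_{m}\binom{2m}{m}x^m\sum_{k}\binom{m}{k}\binom{m+k}{k}\binom{\ell}{m+k}\binom{\ell+m+k}{m+k},
\]
which is \emph{linear} in products $\binom{\ell}{j}\binom{\ell+j}{j}$; the weighted sum over $\ell$ then collapses via the elementary telescoping identity $\sum_{\ell=k}^{n-1}(-1)^\ell(2\ell+1)\binom{\ell}{k}\binom{\ell+k}{k}=(-1)^{n-1}n\binom{n-1}{k}\binom{n+k}{k}$. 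You instead square the elementary identity $\binom{k}{m}\binom{k+m}{m}=\binom{2m}{m}\binom{k+m}{2m}$, land on a sum involving $\binom{k+m}{2m}^2$, and only invoke Pfaff--Saalsch\"utz at the end of an induction. The paper's path is shorter and its linearized form of $A_\ell(x)$ is reused later (for the Schmidt-polynomial congruences and the mod~$p^2$ evaluation of $\sum A_k(x)$); your path trades that structural byproduct for a more self-contained, if more laborious, computation.
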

\begin{proof}
For $\ell,m\in\N$, a special case of the Pfaff-Saalsch\"utz identity (See  \cite[p.~44, Exercise 2.d]{Stanley}
and \cite{Andrews}) reads
\begin{align}
{\ell\choose m}{\ell+m\choose m}
=\sum_{k=0}^m {2m\choose m+k}{\ell-m\choose k}{\ell+m+k\choose k}. \label{eq:lem1}
\end{align}
Multiplying both sides of \eqref{eq:lem1} by ${\ell\choose m}{\ell+m\choose m}$, we obtain another expression of the Ap\'ery polynomials:
\begin{align}
A_\ell(x)=\sum_{m=0}^{\ell}{2m\choose m}x^m\sum_{k=0}^m {m\choose k}{m+k\choose k} {\ell\choose m+k}{\ell+m+k\choose m+k}. \label{eq:square}
\end{align}
Thus,
\begin{align*}
&\hskip -3mm \sum_{\ell =0}^{n-1}(2\ell+1) (-1)^\ell A_{\ell}(x)  \\
&=\sum_{m=0}^{n-1} {2m\choose m}x^m \sum_{k=0}^m {m\choose k} {m+k\choose k}
\sum_{\ell=m+k}^{n-1}(-1)^\ell (2\ell+1){\ell\choose m+k}{\ell+m+k\choose m+k}.
\end{align*}
The result then follows by applying the formula
\begin{align}
\sum_{\ell=k}^{n-1}(-1)^\ell (2\ell+1){\ell\choose k} {\ell+k\choose k}=(-1)^{n-1} n{n-1\choose k}{n+k\choose k}, \label{eq:lemsun}
\end{align}
which can be easily verified by induction.
%
%
 \end{proof}

\begin{thm}\label{thm:2}
Let $p$ be an odd prime and $x\in\mathbb{Z}$. Then
\begin{align*}
\frac{1}{p}\sum_{k=0}^{p-1}(-1)^k(2k+1) A_k(x)  &\equiv  \sum_{k=0}^{p-1}{2k\choose k} x^k\pmod{p^2}.
\end{align*}
\end{thm}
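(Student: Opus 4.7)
The plan is to specialize Theorem~\ref{thm:1} to $n=p$. Since $p$ is odd, $(-1)^{p-1}=1$, and we obtain
\[
\frac{1}{p}\sum_{k=0}^{p-1}(-1)^k(2k+1)A_k(x)=\sum_{m=0}^{p-1}\binom{2m}{m}x^m\,S_m,
\]
where $S_m:=\sum_{k=0}^{m}\binom{m}{k}\binom{m+k}{k}\binom{p-1}{m+k}\binom{p+m+k}{m+k}$. It therefore suffices to prove $\binom{2m}{m}(S_m-1)\equiv 0\pmod{p^2}$ for each $0\le m\le p-1$.

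The first ingredient is the congruence
\[
\binom{p-1}{j}\binom{p+j}{j}=\frac{1}{(j!)^2}\prod_{i=1}^{j}(p-i)(p+i)=\frac{1}{(j!)^2}\prod_{i=1}^{j}(p^2-i^2)\equiv(-1)^{j}\pmod{p^2}
\]
for $0\le j\le p-1$, obtained by noting $p^2-i^2\equiv -i^2\pmod{p^2}$. The second ingredient is the classical identity
\[
\sum_{k=0}^{m}\binom{m}{k}\binom{m+k}{k}(-1)^{m+k}=1\qquad(m\ge 0),
\]
which is the value $P_m(-1)=(-1)^{m}$ of the Legendre polynomial; a short induction on $m$ also suffices.

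Putting the pieces together: when $0\le m\le (p-1)/2$, every $k\in\{0,\dots,m\}$ satisfies $m+k\le 2m\le p-1$, so substituting the first ingredient termwise into $S_m$ and applying the second yields $S_m\equiv 1\pmod{p^2}$. When $(p+1)/2\le m\le p-1$, the factor $\binom{2m}{m}$ is divisible by $p$ exactly once, so it is enough to verify $S_m\equiv 1\pmod{p}$. In this range the terms of $S_m$ with $m+k\ge p$ already vanish (since $\binom{p-1}{m+k}=0$), and in the full Legendre identity those same terms contribute a multiple of $p$, because $\binom{m+k}{k}$ picks up one factor of $p$ in its numerator while its denominator is $p$-free (as $m,k<p\le m+k<2p$). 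Hence the truncation is harmless modulo $p$.

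The only genuinely nontrivial ingredient is the Legendre evaluation $P_m(-1)=(-1)^m$; everything else is clean $p$-adic bookkeeping, so I do not foresee a serious obstacle.
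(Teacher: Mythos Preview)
Your proposal is correct and follows essentially the same route as the paper: specialize Theorem~\ref{thm:1} to $n=p$, use $\binom{p-1}{j}\binom{p+j}{j}\equiv(-1)^j\pmod{p^2}$ for $0\le j\le p-1$, handle the terms with $m+k\ge p$ via the divisibility $p\mid\binom{2m}{m}$ and $p\mid\binom{m+k}{k}$, and conclude with the identity $\sum_{k=0}^m(-1)^{m+k}\binom{m}{k}\binom{m+k}{k}=1$ (which the paper cites as Chu--Vandermonde rather than $P_m(-1)=(-1)^m$). The only cosmetic difference is that you split on whether $m\le(p-1)/2$, whereas the paper treats all $m$ at once by observing directly that $\binom{2m}{m}\binom{m+k}{k}\equiv0\pmod{p^2}$ whenever $p\le m+k<2p$.
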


\begin{proof}
Suppose that $0\leqslant k\leqslant m<p$. If $0\leqslant m+k\leq
p-1$, then
\begin{align*}
{p-1\choose m+k}{p+m+k\choose m+k}=\prod_{i=1}^{m+k}\frac{p^2-i^2}{i^2}\equiv (-1)^{m+k}\pmod{p^2}.
\end{align*}
If $m+k\geqslant p$, then ${p-1\choose m+k}{p+m+k\choose m+k}=0$.
Therefore, by Theorem~{\rm\ref{thm:1}}, we get
\begin{align}
\frac{1}{p}\sum_{\ell =0}^{p-1}(2\ell+1) (-1)^\ell A_{\ell}(x)
\equiv \sum_{m=0}^{p-1} {2m\choose m} x^m \sum_{k=0}^{p-m-1} (-1)^{m+k} {m\choose k}{m+k\choose k}  \pmod{p^2}. \label{eq:congruence1}
\end{align}
Note that, for $0\leqslant k<p-1$ and $p\leqslant m+k<2p$, we have
$$
{2m\choose m}\equiv {m+k\choose k}\equiv 0 \pmod{p},
$$
which means that
\begin{align}
{2m\choose m}{m+k\choose k}\equiv 0\pmod{p^2}. \label{eq:2mm-mk}
\end{align}
Hence, the congruence \eqref{eq:congruence1} may be written as
\begin{align*}
\frac{1}{p}\sum_{\ell =0}^{p-1}(2\ell+1) (-1)^\ell A_{\ell}(x)
&\equiv \sum_{m=0}^{p-1} {2m\choose m} x^m \sum_{k=0}^{m} (-1)^{m+k} {m\choose k}{m+k\choose k}  \\
&=\sum_{m=0}^{p-1}{2m\choose m} x^m  \pmod{p^2},
\end{align*}
where we have used the Chu-Vandermonde summation formula.
\end{proof}

\section{Proof of Theorem \ref{thm:main}}
It is clear that the congruence \eqref{eq:sun1} is an immediate consequence of Theorem \ref{thm:1}. To prove
\eqref{eq:sun2}, we first give the following congruence that was implicitly
given by Sun and Tauraso \cite{ST2} (see also Sun \cite{Sun}).
\begin{lem}\label{lem:3}
Let $p$ be an odd prime and let $a\in\Z$ and $x\in\mathbb{Z}$. Then
\begin{align}
\sum_{k=0}^{p^{a}-1}{2k\choose k}x^k\equiv
\left(\frac{1-4x}{p}\right)^a \pmod p,  \label{eq:prop1}
\end{align}
\end{lem}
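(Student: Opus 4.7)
\medskip

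\noindent\textbf{Proof proposal.} The plan is to induct on $a$, with the base case $a=1$ handled by the classical trick of turning the central binomial coefficient into a shifted standard one, and the inductive step handled by Lucas' theorem combined with Fermat's little theorem.

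For the base case $a=1$, I would begin from the identity $\binom{2k}{k}=(-4)^k\binom{-1/2}{k}$. Since $2\cdot(p-1)/2\equiv -1\pmod p$, one has $-1/2\equiv (p-1)/2\pmod p$, so for $0\le k\le p-1$,
\begin{align*}
\binom{2k}{k}x^k\equiv\binom{(p-1)/2}{k}(-4x)^k\pmod p.
\end{align*}
Summing on $k$ from $0$ to $p-1$ (the terms with $k>(p-1)/2$ vanish on the right, and on the left they have $\binom{2k}{k}\equiv 0\pmod p$, so both sides are unaffected) and applying the binomial theorem gives $(1-4x)^{(p-1)/2}$, which by Euler's criterion is $\left(\tfrac{1-4x}{p}\right)\pmod p$.

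For the inductive step $a\ge 2$, I would write $k=jp+r$ with $0\le j<p^{a-1}$ and $0\le r<p$, and analyze $\binom{2k}{k}\pmod p$ by Lucas. When $2r\le p-1$ the base-$p$ digits of $2k$ are just those of $2j$ followed by $2r$, while those of $k$ are those of $j$ followed by $r$, so Lucas gives $\binom{2k}{k}\equiv \binom{2j}{j}\binom{2r}{r}\pmod p$. When $2r\ge p+1$ a carry occurs and the low digit of $k$ exceeds the low digit of $2k$, so $\binom{2k}{k}\equiv 0\pmod p$. Consequently, using also $\binom{2r}{r}\equiv 0\pmod p$ for $(p+1)/2\le r\le p-1$,
\begin{align*}
\sum_{k=0}^{p^a-1}\binom{2k}{k}x^k\equiv\Bigg(\sum_{j=0}^{p^{a-1}-1}\binom{2j}{j}x^{jp}\Bigg)\Bigg(\sum_{r=0}^{p-1}\binom{2r}{r}x^r\Bigg)\pmod p.
\end{align*}
Fermat's little theorem gives $x^{jp}\equiv x^j\pmod p$, after which the inductive hypothesis applied to the first factor and the base case applied to the second factor yield $\left(\tfrac{1-4x}{p}\right)^{a-1}\!\cdot\!\left(\tfrac{1-4x}{p}\right)=\left(\tfrac{1-4x}{p}\right)^a\pmod p$.

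The main technical nuisance is the Lucas-with-carry bookkeeping: one must justify that the only $r$'s contributing modulo $p$ are those with $r\le(p-1)/2$, and simultaneously check that enlarging the inner $r$-sum back out to $r\le p-1$ is harmless, so that the sum factors cleanly into the form needed for induction. Once this is in place, the combination of Fermat's little theorem with the induction hypothesis is immediate, and the multiplicativity of the Legendre symbol does the rest.
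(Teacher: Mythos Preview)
Your argument is correct and takes a genuinely different route from the paper's. The paper does not prove the lemma from scratch: it disposes of the case $p\mid x$ trivially, and for $p\nmid x$ it chooses $b\in\Z$ with $bx\equiv 1\pmod p$, invokes the Sun--Tauraso result \cite[Theorem~1.1]{ST2} in the form
\[
\sum_{k=0}^{p^a-1}\binom{2k}{k}b^{-k}\equiv\left(\frac{b^2-4b}{p}\right)^{\!a}\pmod p,
\]
and then observes that $\left(\tfrac{b^2-4b}{p}\right)=\left(\tfrac{x^2(b^2-4b)}{p}\right)=\left(\tfrac{1-4x}{p}\right)$. So the paper's proof is essentially a citation plus a one-line change of variable.

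Your approach, by contrast, is self-contained: the base case via $\binom{2k}{k}\equiv(-4)^k\binom{(p-1)/2}{k}\pmod p$ together with Euler's criterion is a clean standard maneuver, and the inductive step through Lucas' theorem and Fermat's little theorem is exactly the mechanism that underlies the Sun--Tauraso result itself. What the paper gains is brevity, at the cost of importing an external reference; what you gain is an elementary argument requiring nothing beyond classical congruences, and you in effect reprove the relevant special case of \cite{ST2} along the way. The Lucas/carry bookkeeping you flag as the ``main technical nuisance'' is indeed the only delicate point, and your handling of it---that $r\ge(p+1)/2$ forces the low base-$p$ digit $2r-p$ of $2k$ to be strictly smaller than the low digit $r$ of $k$, so that Lucas kills $\binom{2k}{k}$ modulo $p$---is correct.
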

\noindent{\it Proof.}  If $p\mid x$, then
\eqref{eq:prop1} clearly holds. If $p\nmid x$, then there is a
positive integer $b$ such that $bx\equiv 1\pmod p$ and
\begin{align*}
\sum_{k=0}^{p^{a}-1}{2k\choose k}x^k \equiv
\sum_{k=0}^{p^{a}-1}{2k\choose k}b^{-k} \pmod p.
\end{align*}
By   \cite[Theorem 1.1]{ST2}, we have
$$
\sum_{k=0}^{p^{a}-1}{2k\choose k}b^{-k} \equiv
\left(\frac{b^2-4b}{p}\right)^a \pmod p.
$$
Since $\left(\frac{x^2}{p}\right)=1$, we obtain
$$
\left(\frac{b^2-4b}{p}\right)=\left(\frac{x^2}{p}\right)\left(\frac{b^2-4b}{p}\right)
=\left(\frac{1-4x}{p}\right).
$$
Combining the above three identities yields \eqref{eq:prop1}. \qed

\medskip
\noindent{\it Proof of \eqref{eq:sun2}.} Theorem \ref{thm:2} implies that
\begin{align}
\sum_{k=0}^{p-1}(-1)^k(2k+1) A_k(x)  &\equiv  p\sum_{k=0}^{p-1}{2k\choose k} x^k\pmod{p^{3}}. \label{eq:2kk1}
\end{align}
The proof then follows from the $a=1$ case of Lemma \ref{lem:3}.  \qed

\medskip
\noindent{\it Proof of \eqref{eq:sun3}.} Letting $x=1$ in \eqref{eq:2kk1}, we have
\begin{align*}
\sum_{k=0}^{p-1}(-1)^k(2k+1) A_k  &\equiv  p\sum_{k=0}^{p-1}{2k\choose k} \pmod{p^{3}}. 
\end{align*}
The proof then follows form the $a=1$ case of the congruence \cite[(1.9)]{ST}
\begin{align*}
\sum_{k=0}^{p^a-1}{2k\choose k}\equiv \left(\frac{p^a}{3}\right) \pmod{p^2}.
\end{align*}
Note that the condition $p>3$ is not necessary in this case. \qed

\medskip
\noindent{\it Proof of \eqref{eq:sun4}.}  Letting $x=-2$ in
\eqref{eq:2kk1}, we have
\begin{align}
\sum_{k=0}^{p-1}(-1)^k(2k+1) A_k(-2)  &\equiv  p\sum_{k=0}^{p-1}{2k\choose k}(-2)^k \pmod{p^{3}}. \label{eq:2kk2}
\end{align}
Similarly to \eqref{eq:prop1}, the first congruence in \cite[Theorem 1.1]{Sun} implies that
\begin{align}
\sum_{k=0}^{p^{a}-1}{2k\choose k}x^k\equiv
\left(\frac{1-4x}{p}\right)^a+\left(\frac{1-4x}{p}\right)^{a-1}u_{p-\left(\frac{1-4x}{p}\right)} \pmod{p^2},  \label{eq:ppsun}
\end{align}
where $p\nmid x$ and the sequence $\{u_n\}_{n\geqslant 0}$ is
defined as
$$
u_0=0,\ u_1=1,\ \text{and $u_{n+1}=(b-2)u_n-u_{n-1},\ n\in\Z,$ where $bx\equiv 1\pmod {p^2}$.}
$$
If $b^2-4b\not\equiv 0\pmod{p^2}$, then
$$
u_n=\frac{(b-2+\sqrt{b^2-4b})^n-(b-2-\sqrt{b^2-4b})^n}{2^n\sqrt{b^2-4b}}.
$$
Now suppose that $p>3$. For $a=1$ and $x=-2$, the congruence \eqref{eq:ppsun} reduces to
\begin{align}
\sum_{k=0}^{p-1}{2k\choose k}(-2)^k \equiv 1+u_{p-1}=1-\frac{4^{p-1}-1}{3\cdot 2^{p-2}}
\equiv 1-\frac{4}{3} \left(2^{p-1}-1\right)\pmod {p^2}.  \label{eq:conj4}
\end{align}
Combining \eqref{eq:2kk2} and \eqref{eq:conj4}, we complete the proof. \qed

\medskip
\noindent{\it Remark.} The following stronger version of \eqref{eq:conj4}:
\begin{align*}
\sum_{k=0}^{p-1}{2k\choose k}(-2)^k \equiv 1-\frac{4}{3} \left(2^{p-1}-1\right)\pmod {p^3}
\quad\text{for any prime $p>3$,}
\end{align*}
was independently obtained by Mattarei and Tauraso \cite{MT} and Z.-W. Sun [arxiv:0911.5665v52, Remark after Conjecture A69].

\section{Generalizations of the congruences \eqref{eq:sun-apery} and \eqref{eq:sun1} }
%
%

Following Schmidt~\cite{Sc} we call the numbers $S_n^{(r)}=\sum_{k=0}^n{n\choose k}^r{n+k\choose k}^r$ the Schmidt numbers,
and define the Schmidt polynomials as follows:
\begin{align*}
S_n^{(r)}(x)=\sum_{k=0}^{n}{n\choose k}^r{n+k\choose k}^r x^k.
\end{align*}
As generalizations of \eqref{eq:sun-apery} and \eqref{eq:sun1}, we have the following congruences for Schmidt polynomials.
\begin{thm}\label{thm:schmidt}
Let $n\in\Z$, $r\geqslant 2$ and $x\in\mathbb{Z}$. Then
\begin{align}
\sum_{k=0}^{n-1}(2k+1)S_k^{(r)}(x) &\equiv 0  \pmod n, \label{eq:schmidt1} \\
\sum_{k=0}^{n-1}(-1)^k (2k+1)S_k^{(r)}(x) &\equiv 0 \pmod n.  \label{eq:schmidt2}
\end{align}
\end{thm}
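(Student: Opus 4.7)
The plan is to extend the approach used in Theorem~\ref{thm:1} and Theorem~\ref{thm:main}(i). Recall that \eqref{eq:sun1} was proved by decomposing $A_\ell(x)$ as a $\mathbb{Z}[x]$-combination of the shift binomials ${\ell\choose j}{\ell+j\choose j}$ via Pfaff--Saalsch\"utz, and then applying the alternating telescoping lemma \eqref{eq:lemsun}. For Schmidt polynomials we will need the analogous decomposition together with both \eqref{eq:lemsun} and its non-alternating companion
$$\sum_{\ell=k}^{n-1}(2\ell+1){\ell\choose k}{\ell+k\choose k}=n{n\choose k+1}{n+k\choose k},$$
which is easily verified by induction on $n$; both right-hand sides are manifestly divisible by $n$.

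The heart of the argument is to establish that for every $r\geq 1$ and $m\geq 0$,
$$\left[{\ell\choose m}{\ell+m\choose m}\right]^{r}=\sum_{j=m}^{rm}D^{(r)}_{m,j}{\ell\choose j}{\ell+j\choose j},\qquad D^{(r)}_{m,j}\in\mathbb{Z}_{\geq 0}.$$
The case $r=2$ is precisely the formula \eqref{eq:square} used in the proof of Theorem~\ref{thm:1}. One then proceeds by induction on $r$; the inductive step reduces to a ``product rule''
$${\ell\choose m}{\ell+m\choose m}{\ell\choose j}{\ell+j\choose j}=\sum_{i=\max(m,j)}^{m+j}e_{m,j,i}{\ell\choose i}{\ell+i\choose i},\qquad e_{m,j,i}\in\mathbb{Z}_{\geq 0}.$$
That such a $\mathbb{Q}$-expansion exists at all follows from the observation that ${\ell\choose j}{\ell+j\choose j}$ is a polynomial of degree $j$ in the variable $u=\ell(\ell+1)$, so $\{{\ell\choose j}{\ell+j\choose j}\}_{j\geq 0}$ is a $\mathbb{Q}$-basis of $\mathbb{Q}[u]$. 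The integrality of the structure constants can then be derived by iterated use of Pfaff--Saalsch\"utz \eqref{eq:lem1} together with the auxiliary identity
$${\ell\choose m}{\ell+m\choose m}{\ell-m\choose k}{\ell+m+k\choose k}={m+k\choose k}^{2}{\ell\choose m+k}{\ell+m+k\choose m+k},$$
which is immediate from factorials and is the very step behind \eqref{eq:square}.

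Once the expansion above is established, one obtains
$$S_\ell^{(r)}(x)=\sum_{j\geq 0}\alpha_j^{(r)}(x){\ell\choose j}{\ell+j\choose j},\qquad \alpha_j^{(r)}(x)\in\mathbb{Z}[x].$$
Interchanging the order of summation in $\sum_{\ell=0}^{n-1}(\pm 1)^{\ell}(2\ell+1)S_\ell^{(r)}(x)$ and applying the appropriate telescoping identity (the non-alternating one for $+1$, the lemma \eqref{eq:lemsun} for $-1$) to each inner sum over $\ell$ yields that every inner sum is an integer multiple of $n$, proving both \eqref{eq:schmidt1} and \eqref{eq:schmidt2} simultaneously.

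The principal obstacle is the integrality of the structure constants $e_{m,j,i}$ in the product rule for $r\geq 3$. This is a combinatorial statement that genuinely extends Pfaff--Saalsch\"utz and is the only new ingredient required beyond the material already developed for Theorem~\ref{thm:1}.
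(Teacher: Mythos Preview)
Your approach coincides with the paper's: it proves (as the lemma immediately following Theorem~\ref{thm:schmidt}) exactly the integer-coefficient expansion
\[
\Bigl[{\ell\choose m}{\ell+m\choose m}\Bigr]^{r}=\sum_{k=m}^{rm}a^{(r)}_{m,k}{\ell\choose k}{\ell+k\choose k},\qquad a^{(r)}_{m,k}\in\mathbb{Z},
\]
by induction on $r$, and then applies the two telescoping identities just as you describe.

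The ``principal obstacle'' you flag is not a genuine one, and requires no ingredient beyond Pfaff--Saalsch\"utz itself. Instead of the one-parameter form~\eqref{eq:lem1}, the paper uses the two-parameter Pfaff--Saalsch\"utz variant
\[
{\ell\choose j}{\ell+j\choose j}=\sum_{i=0}^j \frac{(m+j)!\,i!}{(m+i)!\,j!}{m\choose j-i}{\ell-m\choose i}{\ell+m+i\choose i};
\]
multiplying through by ${\ell\choose m}{\ell+m\choose m}$ and applying your auxiliary identity then gives the explicit product rule
\[
{\ell\choose m}{\ell+m\choose m}{\ell\choose j}{\ell+j\choose j}
=\sum_{i}{m+j\choose j}{m\choose j-i}{m+i\choose i}{\ell\choose m+i}{\ell+m+i\choose m+i},
\]
so $e_{m,j,m+i}={m+j\choose j}{m\choose j-i}{m+i\choose i}\in\mathbb{Z}_{\ge 0}$, exactly as you hoped.
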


\noindent{\it Remark.} When $r=1$, the polynomial $D_n(x):=S_n^{(1)}(x)$ is called the Delannoy polynomial.
Sun \cite[(1.15)]{Sun2} proved that
$$
\frac{1}{n}\sum_{k=0}^{n-1}(2k+1) D_k(x)=\sum_{k=0}^{n-1}{n\choose k+1}{n+k\choose k}x^k\in\mathbb{Z}[x]
\ \text{for $n=1,2,3,\ldots.$}
$$

To prove Theorem 4.1, we need the following Lemma.

\begin{lem}For $\ell, m\in\N$ and $r\geqslant 2$, there exist integers $a_{m,k}^{(r)}$ {\rm(}$m\leqslant k\leqslant rm${\rm)} such that
\begin{align}
{\ell\choose m}^r{\ell+m\choose m}^r=\sum_{k=m}^{rm} a_{m,k}^{(r)}{\ell\choose k}{\ell+k\choose k}. \label{eq:amkr}
\end{align}
\end{lem}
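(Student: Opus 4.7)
I would induct on $r$. For the base case $r=2$, comparing coefficients of $x^m$ in identity~\eqref{eq:square} yields
\begin{align*}
\binom{\ell}{m}^2\binom{\ell+m}{m}^2 = \sum_{k=0}^{m}\binom{2m}{m}\binom{m}{k}\binom{m+k}{k}\binom{\ell}{m+k}\binom{\ell+m+k}{m+k},
\end{align*}
so one may take $a_{m,m+k}^{(2)}=\binom{2m}{m}\binom{m}{k}\binom{m+k}{k}\in\mathbb{Z}$.

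For the inductive step ($r\geq 3$), I would factor
\[
\binom{\ell}{m}^r\binom{\ell+m}{m}^r = \binom{\ell}{m}\binom{\ell+m}{m}\cdot\binom{\ell}{m}^{r-1}\binom{\ell+m}{m}^{r-1},
\]
and apply the induction hypothesis to the second factor. The problem then reduces to establishing, for every $j\geq m$, the product formula
\begin{align*}
\binom{\ell}{m}\binom{\ell+m}{m}\binom{\ell}{j}\binom{\ell+j}{j} = \sum_{s=j}^{m+j}\binom{m+j}{s}\binom{s}{m}\binom{s}{j}\binom{\ell}{s}\binom{\ell+s}{s},
\end{align*}
which specializes at $j=m$ to the base-case identity above. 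Since the coefficients $\binom{m+j}{s}\binom{s}{m}\binom{s}{j}$ are non-negative integers and the index range $[j,m+j]$ stays within $[m,rm]$ upon iteration, this completes the induction and delivers integer $a_{m,k}^{(r)}$ supported on $m\leq k\leq rm$.

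To prove the product formula, I would use induction on $m$, invoking the three-term recurrence
\[
m^2\binom{\ell}{m}\binom{\ell+m}{m} = \bigl[\ell(\ell+1)-m(m-1)\bigr]\binom{\ell}{m-1}\binom{\ell+m-1}{m-1},
\]
which follows directly from the definitions, together with the companion identity
\[
\ell(\ell+1)\binom{\ell}{s}\binom{\ell+s}{s} = (s+1)^2\binom{\ell}{s+1}\binom{\ell+s+1}{s+1} + s(s+1)\binom{\ell}{s}\binom{\ell+s}{s}.
\]
The main obstacle is matching coefficients on the two sides after applying this recurrence; this reduces to a routine binomial identity between the proposed coefficients, verifiable by Zeilberger's algorithm or by direct manipulation of the involved binomials.
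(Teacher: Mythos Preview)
Your overall architecture matches the paper's: induction on $r$, the same base case extracted from \eqref{eq:square}, and reduction of the inductive step to the product formula
\[
\binom{\ell}{m}\binom{\ell+m}{m}\binom{\ell}{j}\binom{\ell+j}{j}
=\sum_{s=j}^{m+j}\binom{m+j}{s}\binom{s}{m}\binom{s}{j}\binom{\ell}{s}\binom{\ell+s}{s},
\]
which is correct and is exactly the identity the paper obtains (their coefficient $\binom{m+j}{j}\binom{m}{m+j-s}\binom{s}{m}$ equals your $\binom{m+j}{s}\binom{s}{m}\binom{s}{j}$). The difference is only in how this product formula is proved. The paper gets it in one stroke from the Pfaff--Saalsch\"utz variant \eqref{eq:lem03}: multiply that identity (with $n=j$) by $\binom{\ell}{m}\binom{\ell+m}{m}$ and use $\binom{\ell-m}{i}\binom{\ell}{m}=\binom{\ell}{m+i}\binom{m+i}{m}$ together with its companion $\binom{\ell+m+i}{i}\binom{\ell+m}{m}=\binom{\ell+m+i}{m+i}\binom{m+i}{m}$, and the result drops out with no further work. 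Your route via induction on $m$ is also valid; carrying it through, the recurrences you wrote reduce the claim to
\[
m^{2}c_{m,j,s}=s^{2}c_{m-1,j,s-1}+\bigl[s(s+1)-m(m-1)\bigr]\,c_{m-1,j,s},
\qquad c_{m,j,s}=\tbinom{m+j}{s}\tbinom{s}{m}\tbinom{s}{j},
\]
which does hold (and can be checked by hand after factoring $s(s+1)-m(m-1)=(s+m)(s-m+1)$ and using $(s-m+1)\binom{s}{m-1}=s\binom{s-1}{m-1}$). So your plan works, but the ``main obstacle'' you flag is one the paper bypasses entirely by citing a classical identity; that is the chief economy the paper's argument buys.
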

\pf We proceed by mathematical induction on $r$. For $r=2$, the identity \eqref{eq:square} gives
$$
a_{m,m+k}^{(2)}={2m\choose m}{m\choose k}{m+k\choose k},\ 0\leqslant
k\leqslant m.
$$
Suppose that \eqref{eq:amkr} holds for $r$. A special case of the Pfaff-Saalsch\"utz identity \cite[(1.4)]{Andrews} reads
\begin{align*}
{\ell+m\choose m+n}{\ell+n\choose n}
=\sum_{k=0}^n {\ell-m\choose k} {m\choose k+m-n}{\ell+m+k\choose \ell}, 
\end{align*}
which can be rewritten as
\begin{align}
{\ell\choose n}{\ell+n\choose n}
=\sum_{k=0}^n \frac{(m+n)!k!}{(m+k)!n!}{m\choose n-k}{\ell-m\choose k}{\ell+m+k\choose k}. \label{eq:lem03}
\end{align}

Now, multiplying both sides of \eqref{eq:amkr} by
${\ell\choose m}{\ell+m\choose m}$ and applying \eqref{eq:lem03}, we have
\begin{align*}
{\ell\choose m}^{r+1}{\ell+m\choose m}^{r+1}
&=\sum_{k=m}^{rm} a_{m,k}^{(r)}{\ell\choose k}{\ell+k\choose k}{\ell\choose m}{\ell+m\choose m} \\
&=\sum_{k=m}^{rm}\sum_{i=0}^k a_{m,k}^{(r)}
\frac{(m+k)!i!}{(m+i)!k!}{m\choose k-i}{\ell-m\choose i}{\ell+m+i\choose i}{\ell\choose m}{\ell+m\choose m}\\
&=\sum_{k=m}^{rm}\sum_{i=0}^k a_{m,k}^{(r)}{m+k\choose k}{m\choose
k-i}{m+i\choose i}{\ell\choose m+i}{\ell+m+i\choose m+i},
\end{align*}
which implies that
$$
a_{m,m+i}^{(r+1)}=\sum_{k=m}^{rm}{m+k\choose k}{m\choose
k-i}{m+i\choose i} a_{m,k}^{(r)}\in\mathbb{Z},\ 0\leqslant
i\leqslant rm.
$$
The proof of the inductive step is then completed. \qed

\medskip
\noindent{\it Proof of Theorem \ref{thm:schmidt}.}
Applying \eqref{eq:amkr} and the easily checked formula (see \cite[Lemma 2.1]{Sun})
$$
\sum_{\ell=k}^{n-1}(2\ell+1){\ell\choose k}{\ell+k\choose k} =n{n\choose k+1}{n+k\choose k},
$$
we have
\begin{align*}
\sum_{\ell=0}^{n-1}(2\ell+1) S_{\ell}^{(r)}(x)
&=\sum_{\ell=0}^{n-1}(2\ell+1) \sum_{m=0}^\ell {\ell\choose m}^r{\ell+m\choose m}^r x^m \\
&=\sum_{m=0}^{n-1} x^m \sum_{\ell=m}^{n-1} \sum_{k=m}^{rm} a_{m,k}^{(r)}(2\ell+1){\ell\choose k}{\ell+k\choose k}\\
&=n\sum_{m=0}^{n-1} x^m \sum_{k=m}^{rm} a_{m,k}^{(r)}{n\choose k+1}{n+k\choose k}.
\end{align*}
Similarly, applying \eqref{eq:amkr} and \eqref{eq:lemsun}, we have
\begin{align*}
\sum_{\ell=0}^{n-1}(-1)^\ell (2\ell+1) S_{\ell}^{(r)}(x)
=(-1)^{n-1}n\sum_{m=0}^{n-1} x^m \sum_{k=m}^{rm} a_{m,k}^{(r)}{n-1\choose k}{n+k\choose k}.
\end{align*}
Therefore the congruences \eqref{eq:schmidt1} and \eqref{eq:schmidt2} hold. \qed

Furthermore, similarly to the proof of \cite[Theoreem 1.2]{GZ}, we can prove the following generalization of Theorem \ref{thm:schmidt}.
\begin{thm}\label{thm:4}
Let $n\in\Z$, $a\in\N$, $r\geqslant 2$ and $x\in\mathbb{Z}$. Then
\begin{align*}
\sum_{k=0}^{n-1}\varepsilon^k (2k+1)k^a (k+1)^a S_k^{(r)}(x) &\equiv 0  \pmod n, \\
\sum_{k=0}^{n-1}\varepsilon^k (2k+1)^{2a+1} S_k^{(r)}(x) &\equiv 0  \pmod n,
\end{align*}
where $\varepsilon=\pm1$.
\end{thm}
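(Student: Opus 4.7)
The plan is to adapt the proof of Theorem~\ref{thm:schmidt} by first absorbing the extra factor $\ell^a(\ell+1)^a$ (respectively $(2\ell+1)^{2a}$) into a $\mathbb{Z}$-linear combination of the basic products $\binom{\ell}{j}\binom{\ell+j}{j}$. Once that absorption is in place, both congruences will follow from the same two summation formulas used to prove Theorem~\ref{thm:schmidt}, namely
\begin{align*}
\sum_{\ell=k}^{n-1}(2\ell+1)\binom{\ell}{k}\binom{\ell+k}{k}&=n\binom{n}{k+1}\binom{n+k}{k},\\
\sum_{\ell=k}^{n-1}(-1)^\ell(2\ell+1)\binom{\ell}{k}\binom{\ell+k}{k}&=(-1)^{n-1}n\binom{n-1}{k}\binom{n+k}{k},
\end{align*}
each of which is visibly $n$ times an integer.

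The main technical ingredient is the identity
\begin{align*}
\ell(\ell+1)\binom{\ell}{k}\binom{\ell+k}{k}=k(k+1)\binom{\ell}{k}\binom{\ell+k}{k}+(k+1)^2\binom{\ell}{k+1}\binom{\ell+k+1}{k+1},
\end{align*}
which I would verify by writing $\binom{\ell}{k+1}=\frac{\ell-k}{k+1}\binom{\ell}{k}$ and $\binom{\ell+k+1}{k+1}=\frac{\ell+k+1}{k+1}\binom{\ell+k}{k}$ and using the factorization $\ell(\ell+1)-k(k+1)=(\ell-k)(\ell+k+1)$. Iterating this identity $a$ times (an easy induction in which both terms on the right shift the upper index by at most one) yields integers $b_{j}^{(a,k)}$ for $0\leqslant j\leqslant a$ such that
\begin{align*}
\ell^a(\ell+1)^a\binom{\ell}{k}\binom{\ell+k}{k}=\sum_{j=0}^{a} b_{j}^{(a,k)}\binom{\ell}{k+j}\binom{\ell+k+j}{k+j}.
\end{align*}

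With this reduction, I would substitute \eqref{eq:amkr} into $\sum_{\ell=0}^{n-1}\varepsilon^\ell(2\ell+1)\ell^a(\ell+1)^a S_\ell^{(r)}(x)$, interchange the order of summation exactly as in the proof of Theorem~\ref{thm:schmidt}, rewrite $\ell^a(\ell+1)^a\binom{\ell}{k}\binom{\ell+k}{k}$ via the expansion above, and finish with the two basic summation formulas; every contribution carries a factor of $n$, proving the first congruence. For the second congruence I would use the algebraic identity $(2\ell+1)^{2a+1}=(2\ell+1)\bigl(4\ell(\ell+1)+1\bigr)^a$ and expand by the binomial theorem; this realizes $(2\ell+1)^{2a+1}$ as a $\mathbb{Z}$-linear combination of $(2\ell+1)\ell^i(\ell+1)^i$ for $0\leqslant i\leqslant a$, reducing the second congruence to the first.

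The main obstacle lies in the first step: identifying an identity that trades a factor of $\ell(\ell+1)$ for an integer combination of basic products whose top index shifts upward by at most one. The clean factorization $\ell(\ell+1)-k(k+1)=(\ell-k)(\ell+k+1)$ is exactly what makes the scheme go through with integer coefficients, and once it is in hand the rest of the argument is a routine bookkeeping extension of the proof of Theorem~\ref{thm:schmidt}.
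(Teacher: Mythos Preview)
Your proposal is correct. The paper does not actually supply a proof of this theorem; it merely states that the result follows ``similarly to the proof of \cite[Theorem~1.2]{GZ}'' and leaves the details to the reader. Your argument is precisely the natural extension of the proof of Theorem~\ref{thm:schmidt}: the recursion
\[
\ell(\ell+1)\binom{\ell}{k}\binom{\ell+k}{k}=k(k+1)\binom{\ell}{k}\binom{\ell+k}{k}+(k+1)^2\binom{\ell}{k+1}\binom{\ell+k+1}{k+1}
\]
(equivalently, the factorization $\ell(\ell+1)-k(k+1)=(\ell-k)(\ell+k+1)$) is exactly the device used in \cite{GZ} to absorb polynomial weights in $\ell$ into integer combinations of the basic products $\binom{\ell}{k}\binom{\ell+k}{k}$, and the reduction $(2\ell+1)^{2a+1}=(2\ell+1)\bigl(4\ell(\ell+1)+1\bigr)^a$ is likewise standard there. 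So your route coincides with the one the paper has in mind.
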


We conclude this section with the following conjecture, which is due to Sun~\cite{Sun}  in the case $r=2$.
\begin{conj}
Let $\varepsilon\in\{\pm1\}$, $m,n\in\Z$, $r\geqslant 2$, and
$x\in\mathbb{Z}$. Then
\begin{align*} 
\sum_{k=0}^{n-1}(2k+1) \varepsilon^k S_k^{(r)}(x)^m &\equiv 0 \pmod n.
\end{align*}
\end{conj}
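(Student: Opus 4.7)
The plan is to generalize the approach used to prove Theorem~\ref{thm:schmidt}. Writing $P_s(\ell):=\binom{\ell}{s}\binom{\ell+s}{s}$, the telescoping identities
\begin{align*}
\sum_{\ell=s}^{n-1}(2\ell+1) P_s(\ell) &= n\binom{n}{s+1}\binom{n+s}{s},\\
\sum_{\ell=s}^{n-1}(-1)^\ell(2\ell+1) P_s(\ell) &= (-1)^{n-1}n\binom{n-1}{s}\binom{n+s}{s},
\end{align*}
already used in Section~4, show that the conjectured congruences for both signs $\varepsilon=\pm 1$ would follow at once if $S_\ell^{(r)}(x)^m$, regarded as a polynomial in $\ell$, admits an expansion $\sum_s B_s(x)P_s(\ell)$ with each $B_s(x)\in\mathbb{Z}[x]$.

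The central step is therefore to prove that the $\mathbb{Z}$-span of $\{P_s(\ell):s\in\mathbb{N}\}$ is a subring of $\mathbb{Z}[\ell]$. For this I would multiply both sides of the special Pfaff--Saalsch\"utz identity~\eqref{eq:lem03} by $\binom{\ell}{a}\binom{\ell+a}{a}$ and then use the elementary rewritings
\begin{align*}
\binom{\ell}{a}\binom{\ell-a}{k}&=\binom{\ell}{a+k}\binom{a+k}{a},\\
\binom{\ell+a}{a}\binom{\ell+a+k}{k}&=\binom{\ell+a+k}{a+k}\binom{a+k}{a}
\end{align*}
to obtain the explicit product rule
\begin{align*}
P_a(\ell)\,P_b(\ell)=\sum_{k=0}^{b}\binom{a+b}{b}\binom{a+k}{k}\binom{a}{b-k}\,P_{a+k}(\ell),
\end{align*}
whose coefficients are nonnegative integers. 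The case $a<b$ causes no difficulty because $\binom{a}{b-k}$ vanishes outside the valid range.

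Once this product rule is established, the $\mathbb{Z}[x]$-module of polynomials in $\ell$ admitting an expansion $\sum_s B_s(x)P_s(\ell)$ with $B_s(x)\in\mathbb{Z}[x]$ is closed under multiplication. The Lemma preceding Theorem~\ref{thm:schmidt} expresses each $P_a(\ell)^r$ as a $\mathbb{Z}$-combination of the $P_s(\ell)$, so $S_\ell^{(r)}(x)=\sum_s B_s(x)P_s(\ell)$ already lies in this module, and hence so does $S_\ell^{(r)}(x)^m$ for every $m\in\mathbb{Z}^+$. Substituting this expansion into $\sum_{\ell=0}^{n-1}\varepsilon^\ell(2\ell+1)S_\ell^{(r)}(x)^m$ and swapping the order of summation reduces the whole sum to $n$ times an integer combination of binomial coefficients, giving divisibility by $n$ for both signs of $\varepsilon$.

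The main obstacle I anticipate is the clean derivation of the product rule: one must keep track of the boundary behaviour of the binomial coefficients through the multiplication by $P_a(\ell)$ and verify that the resulting coefficients really are nonnegative integers rather than merely rationals. Once this product rule is in hand, the conjecture follows formally from the machinery already developed in Section~4, and no further conceptual ideas appear to be required.
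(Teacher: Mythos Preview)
The paper does not prove this statement: it is listed as an open conjecture at the end of Section~4, attributed (in the case $r=2$) to Sun. There is therefore no ``paper's own proof'' to compare against. Your proposal, however, actually settles the conjecture, and the argument is correct.

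Your key step is the product rule
\[
P_a(\ell)\,P_b(\ell)=\sum_{k=0}^{b}\binom{a+b}{b}\binom{a+k}{k}\binom{a}{b-k}\,P_{a+k}(\ell),
\]
which shows that the $\mathbb{Z}$-span of the $P_s(\ell)=\binom{\ell}{s}\binom{\ell+s}{s}$ is closed under multiplication. What is striking is that this rule is already present, verbatim, in the paper: the inductive step in the proof of Lemma~4.2 computes $P_m(\ell)\cdot P_k(\ell)$ via \eqref{eq:lem03} and obtains exactly
\[
P_m(\ell)\,P_k(\ell)=\sum_{i=0}^{k}\binom{m+k}{k}\binom{m}{k-i}\binom{m+i}{i}\,P_{m+i}(\ell),
\]
which is your formula with $(a,b)=(m,k)$. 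The authors used this only to pass from $P_m^{\,r}$ to $P_m^{\,r+1}$ and apparently did not notice that, once stated as a ring-closure statement, it immediately yields an integer expansion $S_\ell^{(r)}(x)^m=\sum_t C_t(x)P_t(\ell)$ with $C_t(x)\in\mathbb{Z}[x]$, after which the two telescoping identities already used in Section~4 give divisibility by $n$ for both $\varepsilon=\pm1$.

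The only care needed---and you should spell this out---is that $S_\ell^{(r)}(x)$ is not a polynomial in $\ell$, so the expansion $\sum_t C_t(x)P_t(\ell)$ is a formal infinite sum in $t$; one must check that for each fixed $t$ only finitely many tuples $(s_1,\ldots,s_m)$ contribute to $C_t(x)$ (since the product rule forces $\max_i s_i\le t$), and that for each fixed $\ell\in\mathbb{N}$ only $t\le\ell$ survives. Both checks are routine. Once these bookkeeping points are written down, your argument is a complete proof of the conjecture using nothing beyond the paper's own Section~4 machinery.
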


\section{Proof of Theorem \ref{thm:3}}
From  the Lagrange interpolation formula for the polynomial
$\prod_{k=1}^m(x-k)$ at the values  $-i$ ($0\leqslant i\leqslant m$)
of $x$ we immediately derive that
\begin{align}
\sum_{k=0}^m {m\choose k}{m+k\choose k}\frac{(-1)^{m-k}}{x+k}=\frac{1}{x}\prod_{k=1}^m\frac{x-k}{x+k}. \label{eq:4m+1x}
\end{align}
A proof of  \eqref{eq:4m+1x} by using the creative telescoping method was given in \cite[Lemma 3.1]{Mortenson}.

\medskip
\noindent{\it Remark.} The identity \eqref{eq:4m+1x} plays an
important role in Mortenson's proof \cite{Mortenson} of
Rodriguez-Villegas's conjectures \cite{RV} on supercongruences for
hypergeometric Calabi-Yau manifolds of dimension $d\leqslant 3$.
Letting $x=m+\frac{1}{2}$ in \eqref{eq:4m+1x}, we obtain
 \begin{align}
\sum_{k=0}^m {m\choose k}{m+k\choose k}\frac{(-1)^{m-k}}{2m+2k+1}=\frac{(2m)!^3}{(4m+1)!m!^2}. \label{eq:4m+1}
\end{align}
\medskip

Let $p>3$ be a prime.
Similarly to the proof of Theorem~\ref{thm:1}, applying \eqref{eq:square} and
the trivial identity
$$
\sum_{\ell=m}^{p-1}{\ell\choose m+k}{\ell+m+k\choose m+k}={2m+2k\choose m+k}{p+m+k\choose 2m+2k+1},
$$
we have
\begin{align}
\sum_{\ell =0}^{p-1} A_{\ell}(x)
&=\sum_{m=0}^{p-1} {2m\choose m}x^m\sum_{k=0}^m {m\choose k}{m+k\choose k}
{2m+2k\choose m+k}{p+m+k\choose 2m+2k+1}. \label{eq:sumax}
\end{align}
Note that, if $0\leqslant m+k<p$, then
\begin{align*}
{2m+2k\choose m+k}{p+m+k\choose 2m+2k+1} &=\frac{(2m+2k)!p \prod_{i=1}^{m+k}(p^2-i^2)}{(m+k)!^2(2m+2k+1)!} \\
&\equiv (-1)^{m+k}\frac{p}{2m+2k+1} \pmod {p^2},
\end{align*}
while if $m+k\geqslant p$, then ${2m+2k\choose m+k}{p+m+k\choose
2m+2k+1}=0$.

From \eqref{eq:sumax}, we deduce that
\begin{align}
\sum_{\ell =0}^{p-1} A_{\ell}(x)
&\equiv \sum_{m=0}^{p-1} {2m\choose m}x^m\sum_{k=0}^{p-m-1} {m\choose k}{m+k\choose k}\frac{(-1)^{m+k}p}{2m+2k+1} \pmod{p^2}. \label{eq:congfrac}
\end{align}
Since $p>3$, we have $2m+2k+1\not\equiv 0\pmod{p^2}$ for any
$0\leqslant k\leqslant m\leqslant p-1$. Thus, by \eqref{eq:2mm-mk}
and \eqref{eq:4m+1}, the congruence \eqref{eq:congfrac} is
equivalent to
\begin{align}
\sum_{\ell =0}^{p-1} A_{\ell}(x)
&\equiv \sum_{m=0}^{p-1} {2m\choose m}x^m\sum_{k=0}^{m} {m\choose k}{m+k\choose k}\frac{(-1)^{m+k}p}{2m+2k+1} \nonumber\\
&=\sum_{m=0}^{p-1} \frac{(2m)!^4 p}{(4m+1)!m!^4} x^m \pmod{p^2}.  \label{eq:4m+1two}
\end{align}
On the other hand, it is not difficult to see that
$$
\frac{(2m)!^4 p}{(4m+1)!m!^4}\equiv 0 \pmod{p^2}\quad\text{for $p/2<m<p$,}
$$
and
$$
\frac{p(2m)!^2}{(4m+1)!}\equiv
\frac{p\prod_{i=1}^{2m}(p^2-i^2)}{(4m+1)!} ={p+2m\choose
4m+1}\pmod{p^2}\quad\text{for $p>3$ and $0\leqslant m<p/2$.}
$$
This proves that
\begin{align}
\sum_{m=0}^{p-1} \frac{(2m)!^4 p}{(4m+1)!m!^4} x^m
&\equiv\sum_{m=0}^{(p-1)/2} \frac{(2m)!^4 p}{(4m+1)!m!^4} x^m  \nonumber \\
&\equiv\sum_{m=0}^{(p-1)/2}{p+2m\choose 4m+1}{2m\choose m}^2 x^m \pmod{p^2}.   \label{eq:4m+1three}
\end{align}
Combining \eqref{eq:4m+1two} and \eqref{eq:4m+1three}, we complete the proof.


\vskip 5mm
\noindent{\bf Acknowledgments.} This work was partially
supported by the Fundamental Research Funds for the Central Universities,  Shanghai Rising-Star Program (\#09QA1401700),
Shanghai Leading Academic Discipline Project (\#B407), and the National Science Foundation of China (\#10801054).

\end{document}